\theoremstyle{definition}
\newtheorem{theorem}{Theorem}
\title{\LARGE \bf
Nonlinear System Level Synthesis for Polynomial Dynamical Systems
}
\author{Lauren Conger, Jing Shuang (Lisa) Li, Eric Mazumdar, Steven L. Brunton%
\thanks{Conger, Li, and Mazumdar are with the Division of Engineering and Applied Sciences at the California Institute of Technology and Brunton is with the Department of Mechanical Engineering at the University of Washington.
{\tt\small \{lconger, jsli, mazumdar\}@caltech.edu}, {\tt\small sbrunton@uw.edu}}%
}
\begin{document}

\maketitle
\thispagestyle{empty}
\pagestyle{empty}

\begin{abstract}
This work introduces a controller synthesis method via system level synthesis for nonlinear systems characterized by polynomial dynamics. 
The resulting framework yields finite impulse response, time-invariant, closed-loop transfer functions with guaranteed disturbance cancellation. Our method generalizes feedback linearization to enable partial feedback linearization, where the cancellation of the nonlinearity is spread across a finite-time horizon. 
This provides flexibility to use the system dynamics to attenuate disturbances before cancellation via control, reducing the cost of control compared with feedback linearization while maintaining guarantees about disturbance rejection. This approach is illustrated on a benchmark example and on a common model for fluid flow control.
\end{abstract}

\section{INTRODUCTION}
There is a large body of work on the control of nonlinear systems~\cite{Khalil,charlet1989dynamic}, with several widely deployed methods, such as model predictive control (MPC)~\cite{Korda2018automatica,Kaiser2018prsa,peitz2019koopman,folkestad2021koopman}, reinforcement learning~\cite{Sutton1998book}, feedback linearization~\cite{charlet1989dynamic,Khalil}, and control Lyapunov functions~\cite{Ames1,Ames2}, among many others.
However, general-purpose approaches to designing stabilizing controllers for large classes of nonlinear dynamical systems remains an open problem. 
In this work, we take steps towards this goal by leveraging new approaches stemming from system level synthesis (SLS) theory to develop stabilizing controllers for the general class of polynomial dynamical systems. This results in a controller that provides guaranteed stabilization in finite time with respect to an initial condition and disturbances. SLS provides a framework of guarantees for disturbance rejection and stabilization in terms of transfer functions that map from the disturbance to the state and input~\cite{Anderson}. Our control method is a generalization of feedback linearization, where instead of using feedback to completely linearize a nonlinear system, we partially linearize the system, optimizing over the amount of feedback linearization at each time step before full cancellation. The optimization of this highly non-convex problem is difficult to solve, but our convergence guarantees hold for a range of parameters, removing the risks of learning sub-optimal solutions. The choice of cost function is interchangeable and can be tailored to specific application areas. 

Previous works in linear and robust SLS ~\cite{Anderson} provide a parameterization that allows for system level constraints and localization of disturbances. Advances in nonlinear SLS provide criteria that must be satisfied by closed-loop transfer functions~\cite{Ho} and methods for controlling nonlinearities induced by saturation of input and state constraints~\cite{Yu}. Current SLS techniques for nonlinear systems include MPC with local linearization~\cite{AmoAlonso1,AmoAlonso2}. We expand on these works by explicitly defining nonlinear transfer functions for polynomial systems, removing the approximation of local linearization.
This class of systems includes a wide range of phenomena, such as the dynamics of fluid flows and other convectively nonlinear systems~\cite{Noack2003jfm}. 
Other systems may often be approximated by polynomial dynamics, either locally or globally, for example through Taylor series expansion~\cite{Brunton} or normal form expansion~\cite{guckenheimer_holmes}. 
 Moreover, recent work by Qian et al.~\cite{Qian} presents a method for learning a lifting map to transform general nonlinear dynamics into a quadratic form. Another recent work~\cite{Borggaard} details approximating feedback controllers for quadratic dynamics with quadratic costs for continuous systems; these quadratic systems fall under the class of systems in this paper.

\section{Preliminaries and Problem Statement}

This work constructs stabilizing controllers for discrete-time nonlinear dynamical systems through the use of SLS. We consider time-invariant nonlinear dynamics
\begin{equation}\label{eq:nonlinear_dynamics}
    x_{t+1} = f(x_t,u_t) + w_t,
\end{equation}
where $x_t \in \mathbb{R}^n$ is the state, $u_t \in \mathbb{R}^m$ is the input, and  $w_t \in \mathbb{R}^n$ is the disturbance at time $t$. For simplicity and without loss of generality, we set $x_0=0$ and drive the system to $x=0$.

We approach controller synthesis by defining closed-loop maps from the disturbances and initial condition to the state and input. We then use SLS to guarantee that the controller is stabilizing. 
The remainder of this section defines notation and reviews background on SLS and nonlinear control. 

\subsection{Notation}
Let $x_t$ denote the system state at time $t$, and $u_t$, $w_t$ denote the control input and disturbance, respectively. Let $x^{(i)}$ be the $i^{th}$ vector index of $x$, and let $x^i$ (without brackets on $i$) be $x$ to the power $i$. Let $w_{t:t-k}$ represent the list of disturbances $(w_t,w_{t-1},\dots,w_{t-k})$. Capital letters represent functions or matrices mapping vectors to vectors, and lower case letters denote scalars. The symbol $\otimes$ denotes the column-wise Kronecker product, defined for $x=[x^{(1)},x^{(2)},\dots,x^{(n)}]^\top$ and $y=[y^{(1)},y^{(2)},\dots,y^{(n)}]^\top$, as 
\begin{align*}
    x \otimes y = & \big[x^{(1)} y^{(1)}, x^{(1)} y^{(2)}, \dots, \\ &x^{(1)} y^{(n)}, x^{(2)} y^{(1)}, x^{(2)} y^{(2)}, \dots\ \dots, x^{(n)} y^{(n)}  \big]^\top
\end{align*}
and $x^{\otimes j}$ denotes $j-1$ Kronecker products of $x$ with itself; e.g., 
    $x^{\otimes 4} = x \otimes x \otimes x \otimes x$.

\subsection{System Level Synthesis}

The key idea of the SLS framework is that instead of formulating a control problem as an optimization over a space of controllers, we formulate the control problem as an optimization over a space of \textit{closed-loop maps} (CLMs). CLMs map exogenous signals that the engineer has no control over (e.g. process disturbance, sensor noise) onto signals of interest that can be affected (e.g. state and control input). This parametrization is especially beneficial for learning-plus-control approaches, as it establishes a relationship between model uncertainty and closed-loop system performance~\cite{Anderson, Dean2019}. In this work, we will restrict ourselves to the state feedback case. We define CLMs from disturbance $w$ to the state $x$ and control input $u$ as $\Psi^x$ and $\Psi^u$, respectively:
\begin{equation}\label{eq:clms}
\begin{split}
    x_t &= \Psi_t^x(w_{t:0}) \\
    u_t &= \Psi_t^u(w_{t:0}).
\end{split}
\end{equation}
Key to our derivation is the following result on CLMs for nonlinear systems.
\begin{theorem} \label{thm:sls_theorem} (Theorem III.3 in \cite{Ho})
For system~\eqref{eq:nonlinear_dynamics}, CLMs $\Psi^x$, $\Psi^u$ are achievable by a causal state feedback controller $u_t = K_t(x_t)$ if and only if they satisfy the following: 
\begin{equation} \label{eq:SLS_condition}
    \Psi^x_t(w_{t:0}) = f(\Psi_{t-1}^x(w_{t-1:0}),\Psi_{t-1}^u(w_{t-1:0})) + w_t.
\end{equation}
We refer to \eqref{eq:SLS_condition} as the \textit{SLS achievability constraint}. Given any CLMs $\Psi^x$, $\Psi^u$ satisfying~\eqref{eq:SLS_condition}, we are able to construct a causal state feedback controller (see \cite{Ho} for details) that achieves these CLMs.
\end{theorem}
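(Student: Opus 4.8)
The plan is to prove the biconditional in \eqref{eq:SLS_condition} by establishing its two implications separately, following the template of the realization theorem for linear SLS in \cite{Anderson} but carrying the nonlinear map $f$ through unchanged. For \emph{necessity}, suppose a causal state-feedback controller $u_t = K_t(x_t)$ is applied to \eqref{eq:nonlinear_dynamics}. For each disturbance sequence $w$ the closed-loop signals are uniquely determined by the recursion $x_0 = 0$, $u_t = K_t(x_t)$, $x_{t+1} = f(x_t,u_t) + w_t$, so the maps $w \mapsto x_t$ and $w \mapsto u_t$ are well defined and coincide with the claimed $\Psi^x_t$, $\Psi^u_t$; substituting these into the plant equation and re-indexing yields \eqref{eq:SLS_condition}. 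This direction requires nothing beyond well-posedness of the closed loop.

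For \emph{sufficiency}, given $\Psi^x,\Psi^u$ satisfying \eqref{eq:SLS_condition} I would exhibit a causal (dynamic) state-feedback controller that reconstructs the disturbance history online and replays the prescribed input. The controller keeps in memory the previously measured state, the previously applied input, and the running list of reconstructed disturbances; at time $t$, on measuring $x_t$, it sets $\widehat w_t := x_t - f(x_{t-1},u_{t-1})$, appends it to the list, and outputs $u_t = \Psi^u_t$ evaluated on the updated list. Crucially, this recovery step evaluates $f$ only in the forward direction --- it never inverts $f$ --- so no invertibility or smoothness hypothesis on $f$ is needed; folding the deterministic internal state into a map of the state history then yields the controller $K_t$.

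I would then prove by induction on $t$ that the reconstruction is exact, $\widehat w_\tau = w_\tau$ for all $\tau$, whence $x_t = \Psi^x_t(w_{t:0})$ and $u_t = \Psi^u_t(w_{t:0})$ for every $w$; the base case handles the boundary condition $x_0 = 0$, and the inductive step is exactly \eqref{eq:SLS_condition}, which is where the achievability constraint does its work. The routine parts are the substitution in the necessity direction and the bookkeeping of this induction; the main obstacle is making the sufficiency construction airtight --- in particular verifying that the reconstruction recursion is causal and non-circular, i.e. that everything $\Psi^u_t$ uses at time $t$ has already been recovered from $x_{0:t}$, which amounts to lining up the index convention in \eqref{eq:clms}--\eqref{eq:SLS_condition}. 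Once the induction closes, uniqueness of the closed-loop trajectory --- already invoked in the necessity part --- guarantees that the constructed controller realizes exactly $\Psi^x,\Psi^u$, which completes the argument.
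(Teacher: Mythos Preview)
The paper does not prove this statement at all: Theorem~\ref{thm:sls_theorem} is quoted verbatim as Theorem~III.3 of \cite{Ho}, and the text explicitly defers to \cite{Ho} for the controller construction. There is therefore nothing in the paper to compare against. Your proposal is nonetheless the standard realization argument that \cite{Ho} itself uses (and that \cite{Anderson} uses in the linear case): necessity by substitution, sufficiency by the disturbance-reconstruction controller $\widehat w\leftarrow x_t - f(x_{t-1},u_{t-1})$ followed by $u_t \leftarrow \Psi^u$ on the reconstructed history, with an induction showing $\widehat w \equiv w$. That is the right argument.

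One caution on the part you already flagged: under the paper's convention $x_{t+1}=f(x_t,u_t)+w_t$ with $x_0=0$, the state $x_t$ depends only on $w_{t-1:0}$, so a strictly static law $u_t=K_t(x_t)$ cannot depend on $w_t$; the expression $u_t=\Psi^u_t(w_{t:0})$ in \eqref{eq:clms} must therefore be read with $\Psi^u_t$ constant in its most recent argument (equivalently, strictly causal), and your reconstruction at time $t$ actually recovers $w_{t-1}$, not $w_t$. This is exactly the index-alignment issue you identified; once you impose strict causality of $\Psi^u$ --- which \cite{Ho} does --- your induction closes without further change.
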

Theorem \ref{thm:sls_theorem} ensures that no viable causal controllers are lost by switching to a parametrization over CLMs; furthermore, the theorem guarantees that one can always reconstruct a causal controller that matches the desired CLMs. We note that~\eqref{eq:SLS_condition} is simply a rewriting of~\eqref{eq:nonlinear_dynamics} so that $u_t$ and $x_t$ are functions of the disturbances rather than previous $x_t$ and $u_t$.

Our analysis is limited to time-invariant CLMs, since $f$ is assumed to be time-invariant; the time-varying case is future work. Therefore, we omit the $t$ subscript on $\Psi^x$ and $\Psi^u$. 

Of particular interest are controllers that achieve a finite impulse response (FIR). CLMs have FIR with time horizon $T$ if they can be written as:
\begin{align}
\label{eq:SLS_condition_finite}
\begin{split}
    \Psi^x(w_{t:0}) &= \Psi^x(w_{t:t-T}) \\
    \Psi^u(w_{t:0}) &= \Psi^u(w_{t:t-T})
\end{split}
\end{align}
where instead of depending on the entire history of disturbance $w$ (i.e. $w_{t:0}$), CLMs only depend on the most recent $T$ values of $w$ (i.e. $w_{t:t-T}$). This can be interpreted as rejecting disturbances after $T$ timesteps. In the subsequent sections we will describe how to construct FIR CLMs in the form of~\eqref{eq:SLS_condition_finite} for polynomial dynamical systems. 

\subsection{Nonlinear Control}
Nonlinear control has a rich and extensive history \cite{kowalski1991nonlinear,Krstic,charlet1989dynamic,Isidori,Khalil} and is an active area of research. 
Dominant techniques include feedback linearization and control Lyapunov functions (CLFs). Feedback linearization cancels nonlinear terms using the input, and then stabilizes resulting linear system dynamics using traditional linear methods~\cite{charlet1989dynamic,Ames1,Isidori,Khalil,Krstic,Sastry}. However, depending on the current state, the nonlinearities can drive the system toward its desired state, and this cancellation unnecessarily uses large inputs~\cite{Ames1,Ames2}. CLFs offer a method for controller design where the control input is based on a certificate for exponential stability; however no formulaic method for finding a certificate exists and a clever selection of the certificate is necessary to avoid overly conservative control. We use this for comparison with our examples; our FIR CLMs remove the need for an exponential stability certificate and also reduce unnecessarily large inputs introduced by feedback linearization. We structure the problem to optimize over the amount of cancellation over a fixed time window by parameterizing the feedback linearization magnitude of each nonlinear term.
\section{Finite Impulse Response for Closed-Loop Maps}
We construct a class of controllers for scalar polynomial systems, followed by vector polynomial systems. The controllers are parameterized by coefficients $\alpha_j^{(k)} \in [0,1]$, over which the controllers are optimized. We show that CLMs under the controllers are stabilizing with respect to disturbances, and comment on the connection between the $\alpha_j^{(k)}$ parameters and feedback linearization, particularly how $\alpha_j^{(0)}=1$ corresponds to immediate feedback linearization. The relationships among the disturbances, transfer functions, and coefficients are illustrated in figure \ref{fig:diagram}.

\begin{figure*}[thpb]
  \centering
  \includegraphics[width=.8\textwidth,trim=110 65 110 120, clip]{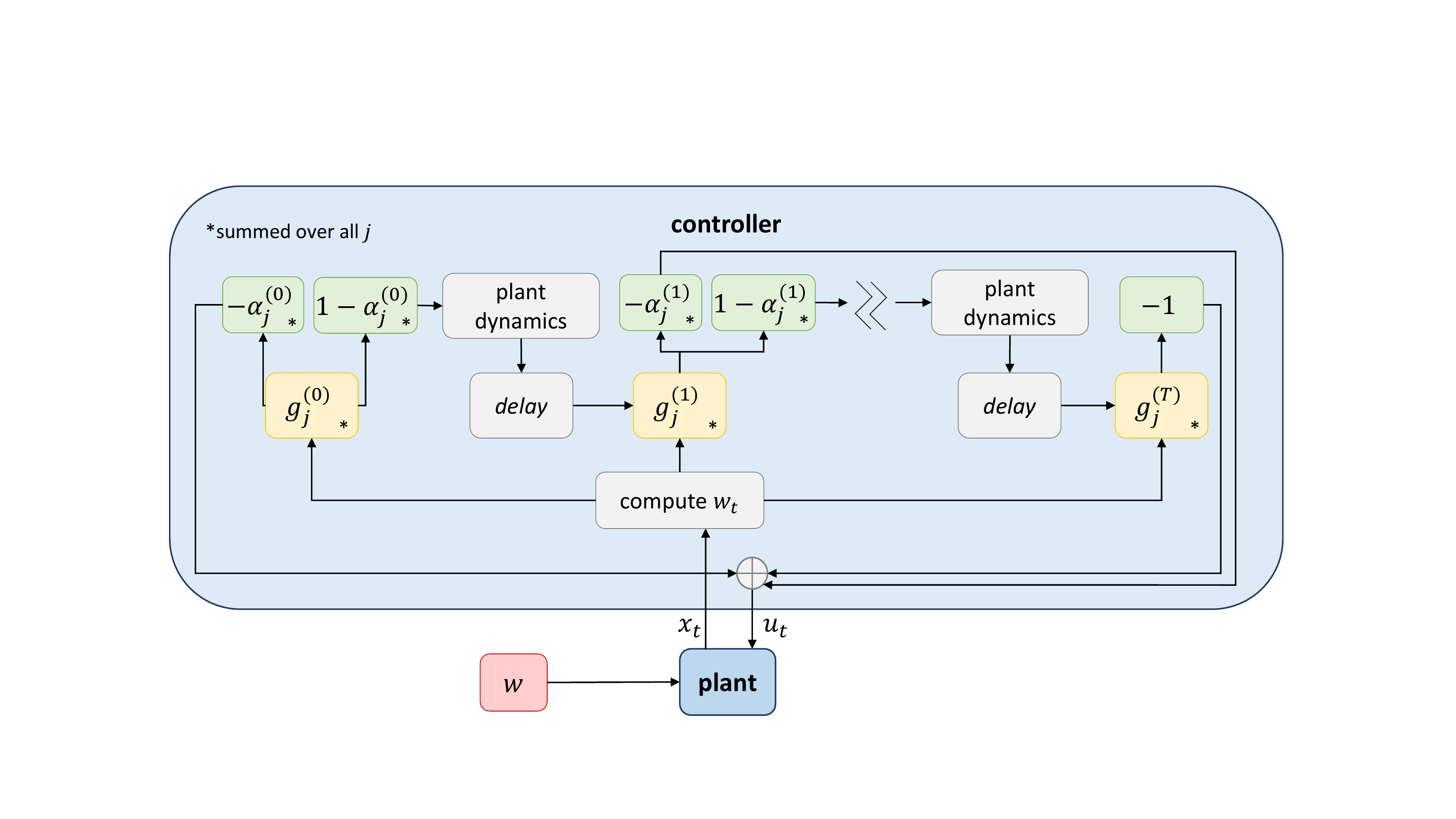}
  \vspace{-.1in}
  \caption{The control input and state are functions of the disturbances. The $\alpha_j^{(k)}$ parameters determine how the effect of the disturbance is distributed between the state and controller transfer functions. }
  \label{fig:diagram}
\end{figure*}

\subsection{Scalar Polynomial Case}
First we prove results for the scalar case for pedagogical ease. Consider the nonlinear difference equation for $x_t \in \mathbb{R}$
\begin{equation}\label{eq:scalar_polynomial}
    x_{t+1} = \sum_{j=1}^n a_j x_t^j+ u_t + w_t,
\end{equation}
where $u_t, w_t \in \mathbb{R}$ and $n$ is a finite positive integer. We consider FIR CLMs $\Psi^x(w_{t:t-T})$ and $\Psi^u(w_{t:t-T})$.
We define the set of functions $ \{g_j^{(k)}(w_{t:t-k})\}_{j=1}^{c_k}$, parameterized by $\alpha_j^{(k)} \in [0,1]$, as the set of monomial functions of $w_{t:t-k}$. Here the superscript $(k)$ indicates the length of the disturbance time window and the subscript $j$ indicates the index of the monomial. The value of $c_1$ depends on the value of $n$; it is the number of monomials that do not depend on $w_t$ after completely expanding the left hand side of \eqref{eq:scalar_zero_to_one}. We define the multi-index $p^{(j,k)}$, where $p^{(j,k)}=(p^{(j,k)}_0,p_1^{(j,k)},\dots,p_{k}^{(j,k)})$, to denote the powers of each disturbance in monomial terms. Let $$w_{t:t-k}^{p^{(j,k)}} = w_t^{p_0^{(j,k)}} w_{t-1}^{p_1^{(j,k)}} \dots w_{t-k}^{p_k^{(j,k)}};$$ the coefficients $b_j^{(k)}$ are constants. Then $g_j^{(k)}$ are defined as 
\begin{equation}\label{eq:g}
    g_j^{(k)}(w_{t:t-k}) = b_j^{(k)} w_{t:t-k}^{p^{(j,k)}}.
\end{equation}

The functions $g_j^{(k)}(w_{t:t-k})$ are computed recursively, for all $j$ from $1$ to $c_0=n$:

\begin{equation*}
    \begin{split}
        g_j^{(0)}(w_t) &= a_j w_t^j \\
        g_j^{(1)}(w_{t:t-1}) &= b_j^{(1)} w_{t:t-1}^{p^{(j,1)} }\\
            \end{split}
\end{equation*}
where $p^{(j,k)}$ and $b_j^{(k)}$ are such that $g_j^{(1)}$ satisfies

\begin{equation*}
    \begin{split}
        &\sum_{j=1}^{c_1} g_j^{(1)}(w_{t:t-1}) + \sum_{j=1}^{n} a_k w_{t}^k \\
        &\qquad \qquad =\sum_{j=1}^n a_j \left( \sum_{k=1}^n (1-\alpha_k^{(0)}) a_k w_{t-1}^k + w_{t} \right)^j.
        \end{split}\end{equation*}
Equivalently, we obtain $g_j^{(1)}$ from $g_j^{(0)}$ as 
\begin{equation}
    \begin{split}\label{eq:scalar_zero_to_one}
        &\sum_{j=1}^{c_1} g_j^{(1)}(w_{t:t-1}) + \sum_{j=1}^{n} g_j^{(0)}(w_{t}) \\
        &\qquad \qquad =\sum_{j=1}^n a_j \left( \sum_{k=1}^n (1-\alpha_k^{(0)}) g_k^{(0)}(w_{t-1}) + w_t \right)^j.
    \end{split}
\end{equation}
Monomial terms depending on only $w_t$ naturally consist of the $g_j^{(0)}(w_t)$ terms. Similar to the zero-to-one case, the value of $c_m$ depends on $n$ and is defined as the number of monomials that depend strictly on $w_t$ through $w_{t-m-1}$. We now obtain $g^{(k+1)}$ from $g^{(k)}$: 
\begin{equation}\label{eq:scalar_recursion}
\begin{split}
    &\sum_{m=0}^T \sum_{j=1}^{c_m} g_j^{(m)}(w_{t:t-m}) \\ 
     &=\sum_{l=1}^n a_l \left(\sum_{m=1}^{T} \sum_{j=1}^{c_m}(1-\alpha_j^{(m)})g_j^{(m)}(w_{t-1:t-m}) + w_t\right)^l.
\end{split}
\end{equation}

 Equipped with this notation, we present the closed loop transfer functions in Theorem \ref{thm:scalar_polynomial}. Note that $\alpha_j^{(k)}$ are the design parameters, while $b_j^{(k)}$ and $p^{(j,k)}$ are inherent to the system dynamics.
 
\begin{theorem}\label{thm:scalar_polynomial}
The closed loop transfer functions 
    \begin{align}
        \Psi^x(w_{t:t-T}) =& \sum_{k=0}^{T-1} \sum_{j=1}^{c_k}(1-\alpha_j^{(k)})g_j^{(k)}(w_{t-1:t-1-k}) + w_t \nonumber\\
        \Psi^u(w_{t:t-T}) =& - \sum_{k=0}^{T-1} \sum_{j=1}^{c_k} \alpha_j^{(k)} g_j^{(k)}(w_{t:t-k})\nonumber \\
        & -\sum_{j=1}^{c_T} g_j^{(T)}(w_{t:t-T})  \label{eq:clm_scalar}
    \end{align}
satisfy the SLS achievability constraint \eqref{eq:SLS_condition} for the nonlinear dynamics defined by \eqref{eq:scalar_polynomial}, resulting in a FIR stabilizing controller for all $\alpha_j^{(k)} \in [0,1]$, where $g_j^{(k)}$ are defined by \eqref{eq:g}.
\end{theorem}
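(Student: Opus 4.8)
The plan is to verify the SLS achievability constraint \eqref{eq:SLS_condition} for \eqref{eq:scalar_polynomial} by substituting the proposed maps $\Psi^x,\Psi^u$ into its right-hand side and collapsing the result to $\Psi^x(w_{t:t-T})$ via the recursive construction of the monomials $g_j^{(k)}$; the FIR and stabilization claims then follow from the form of \eqref{eq:clm_scalar} together with Theorem~\ref{thm:sls_theorem}. The FIR property is in fact immediate: for $k\le T-1$ the argument $w_{t-1:t-1-k}$ of $g_j^{(k)}$ reaches back no further than $w_{t-T}$, so $\Psi^x$ and $\Psi^u$ depend on the disturbance only through $w_{t:t-T}$, which is \eqref{eq:SLS_condition_finite}.

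The one genuinely technical point is that the recursion \eqref{eq:scalar_recursion} is well-posed, i.e., that it actually determines a family $\{g_j^{(k)}\}$ in which $g_j^{(k)}$ is a monomial supported on $w_t,\dots,w_{t-k}$ and, by construction, every such monomial contains the oldest variable $w_{t-k}$. I would prove this by induction on $k$, with inductive hypothesis that $\sum_{l=1}^n a_l\big(w_t+\sum_{m=0}^{k-1}\sum_j(1-\alpha_j^{(m)})g_j^{(m)}(w_{t-1:t-1-m})\big)^l=\sum_{m=0}^{k}\sum_j g_j^{(m)}(w_{t:t-m})$, the base case $g_j^{(0)}(w_t)=a_j w_t^j$ being read off from \eqref{eq:scalar_polynomial}. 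For the step, write $A=w_t+\sum_{m=0}^{k-1}\sum_j(1-\alpha_j^{(m)})g_j^{(m)}(w_{t-1:t-1-m})$ for the horizon-$k$ state map and $B=\sum_j(1-\alpha_j^{(k)})g_j^{(k)}(w_{t-1:t-1-k})$ for the newly added level, so that $A+B$ is the horizon-$(k+1)$ state map. By the support property, $A$ involves $w_t,\dots,w_{t-k}$ only and every monomial of $B$ contains $w_{t-k-1}$; hence in the expansion $(A+B)^l=A^l+\big((A+B)^l-A^l\big)$ the first piece contains no factor $w_{t-k-1}$ while every monomial of the second piece does, so the two pieces share no monomials and cannot cancel against one another. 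Applying the inductive hypothesis to $\sum_l a_l A^l$ and defining $\sum_j g_j^{(k+1)}(w_{t:t-k-1})$ to be $\sum_l a_l\big((A+B)^l-A^l\big)$ --- which fixes $c_{k+1}$ --- yields the inductive hypothesis at $k+1$ and hence \eqref{eq:scalar_recursion}; the first instance, from $k=0$ to $k=1$, is exactly \eqref{eq:scalar_zero_to_one}.

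Given well-posedness, verifying \eqref{eq:SLS_condition} is a telescoping cancellation. The quantity raised to the power $l$ in \eqref{eq:scalar_recursion} is precisely $\Psi^x(w_{t:t-T})$, so the recursion reads $\sum_{m=0}^{T}\sum_j g_j^{(m)}(w_{t:t-m})=\sum_{l=1}^n a_l\big(\Psi^x(w_{t:t-T})\big)^l$; evaluating this identity on the shifted window gives $\sum_{m=0}^{T}\sum_j g_j^{(m)}(w_{t-1:t-1-m})=\sum_{l=1}^n a_l\big(\Psi^x(w_{t-1:t-1-T})\big)^l$. Since $f(x,u)=\sum_{l=1}^n a_l x^l+u$, the right-hand side of \eqref{eq:SLS_condition} becomes $\sum_{l=1}^n a_l\big(\Psi^x(w_{t-1:t-1-T})\big)^l+\Psi^u(w_{t-1:t-1-T})+w_t=\sum_{m=0}^{T}\sum_j g_j^{(m)}(w_{t-1:t-1-m})+\Psi^u(w_{t-1:t-1-T})+w_t$. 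Substituting $\Psi^u$ from \eqref{eq:clm_scalar}, the $m=T$ term cancels the terminal contribution $\sum_j g_j^{(T)}(w_{t-1:t-1-T})$, and for each $0\le k\le T-1$ the level-$k$ terms combine as $g_j^{(k)}-\alpha_j^{(k)}g_j^{(k)}=(1-\alpha_j^{(k)})g_j^{(k)}$, leaving exactly $\sum_{k=0}^{T-1}\sum_j(1-\alpha_j^{(k)})g_j^{(k)}(w_{t-1:t-1-k})+w_t=\Psi^x(w_{t:t-T})$. This is \eqref{eq:SLS_condition}, and since the $\alpha_j^{(k)}$ enter only as multiplicative weights untouched by the cancellation, it holds for every $\alpha_j^{(k)}\in[0,1]$.

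It remains to record the stabilization claim. Because \eqref{eq:SLS_condition} holds, Theorem~\ref{thm:sls_theorem} produces a causal state-feedback controller realizing $\Psi^x,\Psi^u$, giving a FIR closed loop. Every summand of $\Psi^x,\Psi^u$ in \eqref{eq:clm_scalar} is a monomial of positive degree in the disturbances, apart from the single term $w_t$ in $\Psi^x$; hence $\Psi^x(0,\dots,0)=0$, so with $x_0=0$ and no disturbance the trajectory remains at the origin, once the disturbance vanishes for $T$ consecutive steps the state is driven to $0$ and held there, and for bounded disturbances the state is a fixed polynomial in the $T+1$ most recent disturbance values and therefore bounded. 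I expect the main obstacle to be the inductive bookkeeping of paragraph two --- making rigorous the statement that, upon expanding the $l$-th power, exactly the monomials carrying the oldest disturbance $w_{t-k-1}$ constitute the new level $g^{(k+1)}$ while the rest reproduce the lower levels --- since once that monomial accounting is established, both the well-definedness of the construction and the cancellation in paragraph three are routine.
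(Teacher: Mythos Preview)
Your proof is correct and follows essentially the same telescoping-cancellation approach as the paper: substitute $\Psi^x,\Psi^u$ into the achievability constraint, invoke the recursion \eqref{eq:scalar_recursion} to rewrite the polynomial term, cancel the $g^{(T)}$ contribution against the terminal piece of $\Psi^u$, and combine the remaining level-$k$ terms into $(1-\alpha_j^{(k)})g_j^{(k)}$. Your inductive well-posedness argument for the recursion and your explicit FIR/stabilization discussion go beyond what the paper supplies, which simply treats \eqref{eq:scalar_recursion} as a definition and invokes Theorem~\ref{thm:sls_theorem} directly.
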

\begin{proof}
We consider FIR transfer functions. For a system with dynamics defined by \eqref{eq:scalar_polynomial}, the SLS achievability constraint \eqref{eq:SLS_condition} can be written as:
\begin{equation} \label{eq:SLS_condition_scalar}
    \begin{split}
        \Psi^x(w_{t:t-T}) = \sum_{j=1}^n a_j \left(\Psi^x(w_{t-1:t-1-T})\right)^j \\
        + \Psi^u(w_{t-1:t-1-T}) + w_t.
    \end{split}
\end{equation}
Plugging \eqref{eq:clm_scalar} into \eqref{eq:SLS_condition_scalar} gives the following equality, which we wish to satisfy:
\begin{align*}
         &\sum_{k=0}^{T-1} \sum_{j=1}^{c_k}(1-\alpha_j^{(k)})g_j^{(k)}(w_{t-1:t-1-k})+ w_t\\ 
         &= \sum_{j=1}^n a_j \left(  \sum_{k=0}^{T-1} \sum_{j=1}^{c_k}(1-\alpha_j^{(k)})g_j^{(k)}(w_{t-2:t-2-k}) + w_{t-1}\right)^j\\
         &\qquad - \sum_{k=0}^{T-1} \sum_{j=1}^{c_k} \alpha_j^{(k)} g_j^{(k)}(w_{t-1:t-1-k})\\
         &\qquad -\sum_{j=1}^{c_T}g_j^{(T)}(w_{t-1:t-1-T}) + w_t.
\end{align*}
The $w_t$ term appears on both sides, and cancels out. We can rewrite the first term on the right hand side using the recursive definition of $g$, per \eqref{eq:scalar_recursion}. Then \eqref{eq:SLS_condition} is satisfied if the following equality holds:
\begin{align*}
        &\sum_{k=0}^{T-1} \sum_{j=1}^{c_k} \left( 1-\alpha_j^{(k)} \right) g_j^{(k)}(w_{t-1:t-1-k})\\ &\qquad = \quad \sum_{k=0}^{T} \sum_{j=1}^{c_k} g_j^{(k)}(w_{t-1:t-1-k}) \\
         &\qquad \quad-\sum_{j=1}^{c_T}g_j^{(T)}(w_{t-1:t-1-T}) \\
         &\qquad \quad- \sum_{k=0}^{T-1} \sum_{j=1}^{c_k} \alpha_j^{(k)} g_j^{(k)}(w_{t-1:t-1-k}).
\end{align*}
On the right hand side, the $g_j^{(T)}$ terms cancel and the two remaining sums are equal:
\begin{equation*}
\begin{split}
        = &\quad \sum_{k=0}^{T-1} \sum_{j=1}^{c_k} g_j^{(k)}(w_{t-1:t-1-k}) \\
         &\qquad \quad-\sum_{j=1}^{c_T} \left( g_j^{(T)}(w_{t-1:t-1-T}) - g_j^{(T)}(w_{t-1:t-1-T}) \right )\\
         &\qquad \quad-\sum_{k=0}^{T-1} \sum_{j=1}^{c_k} \alpha_j^{(k)} g_j^{(k)}(w_{t-1:t-1-k}) \\
        = &\quad \sum_{k=0}^{T-1} \sum_{j=1}^{c_k} \left( 1-\alpha_j^{(k)} \right ) g_j^{(k)}(w_{t-1:t-1-k}).
\end{split}
\end{equation*}
By Theorem \ref{thm:sls_theorem}, any continuous close-loop map that satisfies \eqref{eq:SLS_condition} results in a stabilizing controller, which we show above. The controller is given by
\begin{equation}
    u_t = \Psi^u(w_{t:t-T})
\end{equation}
where $\Psi^u(w_{t:t-T})$ defined in \eqref{eq:clm_scalar}. Because we consider the state feedback case, we can compute prior disturbances by subtracting the expected state from the actual state. The disturbance at time $t$ is computed by
\begin{equation*}
    w_t = x_{t+1} - \sum_{j=1}^n a_j x_t^j - u_t.
\end{equation*}
\end{proof}
Note that the term $\sum_{j=1}^{c_T} g_j^{(T)}(w_{t:t-T})$ in $\Psi^u$ is the only term depending on the least recent disturbance $w_{t-T}$. This term cancels out any remaining effects from $w_{t-T}$, and is what makes the transfer functions have a finite-time horizon.

\subsection{Vector Polynomial Case}
Now we generalize Theorem \ref{thm:scalar_polynomial} to the case where $x_t \in \mathbb{R}^n$. The dynamics we consider take the form
\begin{equation}\label{eq:vector_dynamics}
    \begin{split}
        x_{t+1} = \sum_{j=1}^n H_j x_t^{\otimes j} + u_t + w_t
    \end{split}
\end{equation}
where $H_j \in \mathbb{R}^{n^j}$ and $u_t,\ w_t \in \mathbb{R}^n$.
\paragraph*{Remark} Key to the derivation is the assumption that $u_t \in \mathbb{R}^n$, meaning that the controls are potentially unbounded, and that we have direct control over each state. We define a set of functions $G_j^{(k)}(w_{t:t-k})$ that are vectors of monomial functions of entries of the disturbance vectors $w_{t:t-k}$. The functions are parametrized by coefficient matrices $B_j^{(k)}$ and multi-indices $p^{(j,k)}$ as in the scalar case:
\begin{equation*}
    \begin{split}
    G_j^{(k)} &= B_j w_{t:t-k}^{\otimes p^{(j,k)}} \\
    w_{t:t-k} &= w_t^{\otimes p_0^{(j,k)}} \otimes w_{t-1}^{\otimes p_1^{(j,k)}} \otimes \dots \otimes w_{t-k}^{\otimes p_k^{(j,k)}}.
    \end{split}
\end{equation*}

Again, we define the functions recursively starting at $G_j^{(0)}$. The matrices $G_j^{(0)}$ correspond to the scalar $g_j^{(k)}$ in the scalar case.
\begin{equation*}
\begin{split}
        G_j^{(0)}(w_t) &= H_j w_t^{\otimes j} \\
        G_j^{(1)}(w_t) &= B_j w_t^{\otimes p_0^{(j,1)}} \otimes w_{t-1}^{\otimes p_1^{(j,1)}}, \\
    \end{split}
\end{equation*}
where $B_j$ and $p^{(j,1)}$ are such that
\begin{equation*}
\begin{split}
        &\sum_{j=1}^n H_j \left(\sum_{k=1}^n (1-A_k^{(0)}) H_k w_{t-1}^{\otimes k} + w_t \right)^{\otimes j} \\&\qquad  =\sum_{j=1}^{c_1}G_j^{(1)}(w_{t:t-1}) + \sum_{j=1}^n H_j w_t^{\otimes j},
    \end{split}
\end{equation*}
or equivalently in terms of $G_j^{(0)}$,
\begin{equation}
    \begin{split}
        \sum_{j=1}^n H_j \left(\sum_{k=1}^n (1-A_k^{(0)}) G_j^{(0)}(w_{t-1}) + w_t \right)^{\otimes j} \\ =\sum_{j=1}^{c_1}G_j^{(1)}(w_{t:t-1}) + \sum_{j=1}^n G_j^{(0)}(w_t).
    \end{split}
\end{equation}
We denote $c_k$ as the appropriate number of monomial terms. The general recursion equation is given by
\begin{align*}
      \sum_{l=1}^n H_l \left(\sum_{m=1}^{T} \sum_{j=1}^{c_m}(1-A_j^{(m)})G_j^{(m)}(w_{t-1:t-m}) + w_t\right)^{\otimes l} \\
    = \sum_{m=0}^T \sum_{j=1}^{c_m} G_j^{(m)}(w_{t:t-m}).  
\end{align*}
\begin{proof}
The proof follows identically to the proof for the scalar case. Note that the number of coefficients is significantly greater for the vector case.
\end{proof}

\subsection{Relationship to Feedback Linearization}
We designed the CLMs $\Psi^x$ and $\Psi^u$ to be a generalization of feedback linearization. We discuss two observations regarding $\alpha_j^{(k)}$ (equivalently, $A_j^{(k)}$):
\begin{enumerate}
    \item[I] The value of $\alpha_j^{(k)}$ determines the amount of feedback linearization for monomial $j$.
    \item[II] The value of $\alpha_j^{(k)}$ reflects the assignment of feedback linearization dependent on disturbances up to $k$ time steps prior to current actuation.
\end{enumerate}
\subsubsection{Amount of Feedback Linearization}
If we were using pure feedback linearization, we would select $\alpha_j^{(k)}=1$ corresponding to all $g_j^{(k)}(w_{t:t-k})$ that are nonlinear in $w$, which would result in a control input $u_t$ that completely cancels the nonlinear terms.  The choice of $\alpha_j^{(k)}$ taking values between zero and one offers the flexibility to use feedback linearization only when it is necessary to drive the state to zero. CLFs outperform feedback linearization by allowing the dynamics to naturally dampen disturbances; our controller behaves similarly.

\subsubsection{Feedback Linearization over Time}
When we employ pure feedback linearization, that is, $\alpha_j^{(k)}=1$ for all nonlinear terms, the controller $u_t$, parameterized by $\Psi^u$, cancels the disturbance at the first possible time step, rendering the dynamics linear. When we do not immediately cancel the disturbances, they propagate through the dynamics before they are cancelled $T$ iterations after introduction to the system. When such propagation reduces the effect of the disturbance on the state, we use less control effort by cancelling later. 
\paragraph*{Remark} Although we consider $\alpha_j^{(k)}$ to be constants in this paper, they could be functions of $w_{t:t-k}$ which would improve performance. This would be even more similar to CLFs, where the controller parameters depend on the state.
\paragraph*{Remark} The functions $g_j^{(k)}$, $G_j^{(k)}$ contain the complicated nesting of the polynomial recursions, and are in general nontrivial to compute. In our examples we use Mathematica for these computations; generalizing this process is future research.

\section{Optimizing the Controller}
Now that we have a set of functions parameterizing the CLMs for our polynomial dynamics, we seek to choose coefficients $\alpha_j^{(k)}$ or $A_j^{(k)}$ that provide the best performance relative to a cost function. In linear SLS, the cost function is a function of the transfer function matrices $\Psi^x$ and $\Psi^u$, which results in a cost that is independent of the disturbances. Here we present two definitions of cost in terms of these nonlinear operators, depending on the user's knowledge of the disturbances. Our goal is to minimize the cost, $J(\Psi^x,\Psi^u)$, over functions $\Psi^x$ and $\Psi^u$ that take the form given in Theorem \eqref{thm:scalar_polynomial}.

\subsection{Known Disturbance Distribution}
Consider the case where the user has a reasonable estimate of the disturbance distribution $W$ where $w_t \sim W$. We define the cost of the CLMs, for $Q \succeq 0$, $R \succ 0$, as
\begin{align}\label{eq:quadratic_cost_function}
    \begin{split}
        J(\Psi^x,\Psi^u) = \mathbb{E}_W \big[& \Psi^x(w_{t:t-T})^\top Q \Psi^x(w_{t:t-T})  \\ & +\Psi^u(w_{t:t-T})^\top R \Psi^u(w_{t:t-T}) \big].
    \end{split}
\end{align}

This definition is useful for optimizing with respect to the effect of the nonlinearities in the dynamics which could depend, for example, on the sign of the noise. We minimize this in example \ref{ex:scalar_quadratic} using gradient descent libraries on a large number of disturbances sampled from a uniform distribution.
\subsection{Unknown Disturbances}
To present a cost function comparable to traditional methods, we present an induced norm on the transfer function that is useful when the disturbances are unknown and we respond to the worst-case noise sequences. For a polynomial function of order $N$ with an FIR horizon of $T$, the cost is given by

\begin{align*}
    &J(\Psi^x,\Psi^u) &= \max_{\left\|w_{t:t-T}\right\| \leq 1} \left \| C \Psi^x(w_{t:t-T}) + D \Psi^u(w_{t:t-T}) \right \|,
\end{align*}
for any induced vector norm.

\section{Examples}
\subsection{Scalar Quadratic}\label{ex:scalar_quadratic}
Consider the dynamical system given by
\begin{equation}\label{eq:scalar_quadratic}
    x_{t+1} = x_t^2 - x_t +u_t + w_t
\end{equation}
where $x_t,u_t,w_t \in \mathbb{R}$. As shown in the scalar polynomial section, we want to find a time-invariant parameterization for $u_t$ that depends upon a fixed number of previous disturbances, defined to be $T$. We begin by defining the set of functions $g_j^{(k)}$ for this particular system. By definition, the values of $g_j^{(0)}$ are given by
\begin{equation*}
    \begin{split}
        g_1^{(0)}(w_t) &= -w_t \\
        g_2^{(0)}(w_t) &= w_t^2.
    \end{split}
\end{equation*}
Using the recursion equation, we determine that the set of $g_j^{(1)}$ must satisfy
\begin{equation*}
    \begin{split}
        \left( (1-\alpha_2^{(0)}) w_{t-1}^2 - (1-\alpha_1^{(0)}) w_{t-1} + w_t \right)^2 \\+ (1-\alpha_2^{(0)}) w_{t-1}^2 - (1-\alpha_1^{(0)}) w_{t-1} + w_t \\
        = \sum_{j=1}^{c_1}g_j^{(1)}(w_{t:t-1}) - w_t + w_t^2.
    \end{split}
\end{equation*}
Let $\beta_1 = (1-\alpha_1^{(0)})$ and $\beta_2 =(1-\alpha_2^{(0)})$. Then $c_1 = 6$ and the functions are given by
\begin{align*}
    g_1^{(1)} &= \beta_2^2 w_{t-1}^4  &g_2^{(1)} &= 2\beta_1 \beta_2 w_{t-1}^3 \\
    g_3^{(1)} &= (\beta_1^2 -\beta_2) w_{t-1}^2  &g_4^{(1)} &= 2\beta_2 w_{t-1}^2 w_t \\
    g_5^{(1)} &= -2\beta_1 w_{t-1} w_t &g_6^{(1)} &= \beta_1 w_{t-1}.
\end{align*}
Here, $p^{(1,0)}=[1]$, $p^{(2,0)}=[2]$, $p^{(1,1)}=[0,4]$, $p^{(4,1)}=[1,2]$, etc.. For the sake of brevity, we fix $T=2$ and do not list the $g_j^{(2)}$ functions because we have $c_2 = 26$. The controller $u_t$ is given by 
\begin{equation*}
    u_t = -\sum_{j=1}^{26} g_j^{(2)}(w_{t:t-T}) - \sum_{j=1}^6 \alpha_j^{(1)}g_j^{(1)}(w_{t:t-1}) - \sum_{j=1}^2 \alpha_j^{(0)} 
    g_j^{(0)}. 
\end{equation*}
To illustrate the effect of the alpha parameters, we fix all but one $\alpha_j^{(k)}$ to values determined by gradient descent, varying only $\alpha_2^{(0)}$. We compute the total cost $J$ given by \eqref{eq:example_cost_function} over 100 trials of random sequences of noise, each of length 23.  
\begin{align}\label{eq:example_cost_function}
    J(\Psi^x,\Psi^u) = \left( \Psi^x \right)^2 + \left( \Psi^u \right)^2.
\end{align}

Simulation results for this example are shown in figure \ref{fig:scalar_quad_results}. The cost is smooth and convex in $\alpha_2^{(0)}$, and outperforms a hand-tuned CLF with the Lyapunov equation given by $V(x) = x^2$. Note that we are optimizing one term out of 34, so the effect from a single $\alpha$ parameter will be small relative to the total cost.
\begin{figure}[t!]
  \centering
  \includegraphics[width=.235\textwidth,trim=0 15 0 5, clip]{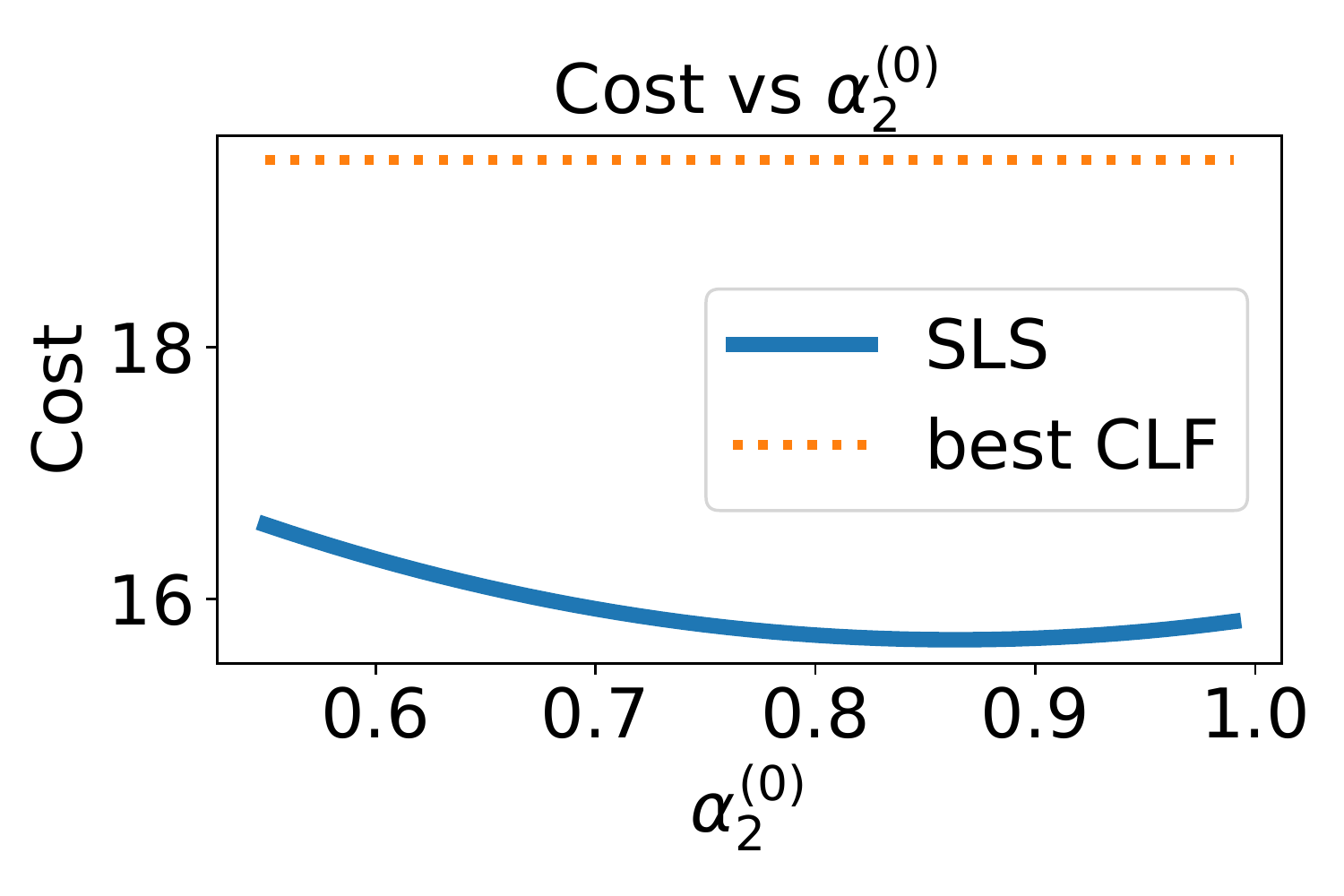}
  \includegraphics[width=.235\textwidth,trim=0 15 0 5, clip]{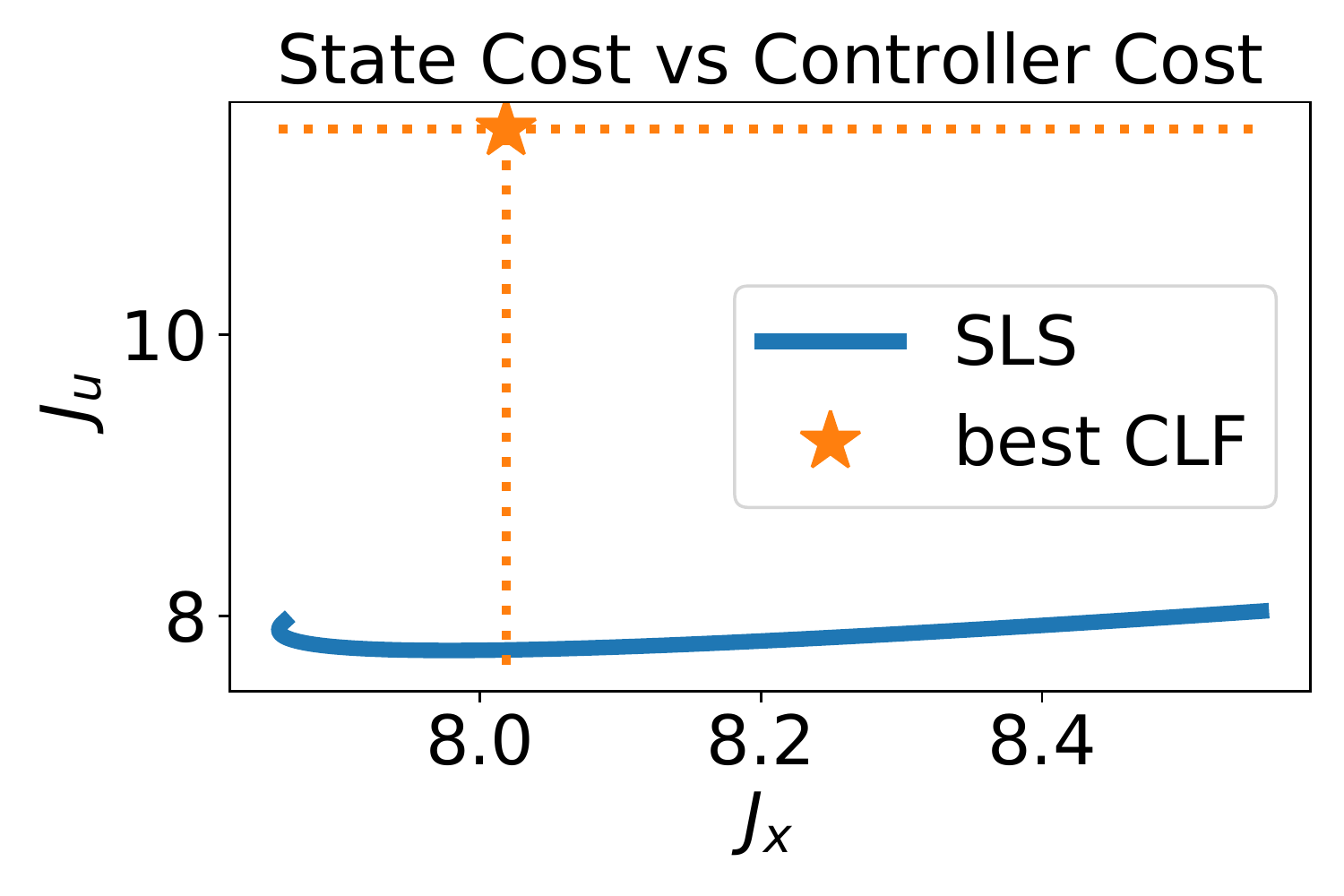}
  \vspace{-.1in}
  \caption{The values chosen for the $\alpha_j^{(k)}$ coefficients determine the assignment of nonlinearity to the state and controller transfer functions. We can optimize the total cost with respect to these coefficients.}
  \label{fig:scalar_quad_results}
\end{figure}

\subsection{Vortex Shedding Behind an Obstacle}
The nonlinear dynamics governing the fluid flow past a circular cylinder can be represented as a system of three quadratic ordinary differential equations ~\cite{Noack2003jfm}. This quadratic system is of interest to researchers in fluid mechanics and related fields. The dynamics can be written as a nonlinear difference equation given by 
\begin{equation}
\begin{split}
    x_{t+1} &= \mu x_t - \omega y_t + a x_t y_t + w_t^v\\
    y_{t+1} &= \omega x_t + \mu y_t + a y_t z_t + u_t + w_t^y \\
    z_{t+1} &= -\lambda (z_t-x_t^2-y_t^2) + w_t^z.
\end{split}
\end{equation}
Note that this takes the same form as \eqref{eq:vector_dynamics} where $\mathbf{x_t} = [x_t,\ y_t,\ z_t]^\top$, $\mathbf{w_t}=[w_t^v,\ w_t^y,\ w_t^z]$, and $\mathbf{u_t} = [0,\ u_t,\ 0]$. We wish to construct a parameterization for $u_t$ that stabilizes $y_t$ near to zero where the noise is sampled as $w_t^* \sim U(-1,1)$. For the case where $T=1$, we list the values of $G_j^{(0)}$ below:
\begin{align*}
        G_1^{(0)} &=[0, \omega x_0, 0]^\top  &G_2^{(0)} &=[0, \mu y_0, 0]^\top \\
        G_3^{(0)} &=[0, a y_0 z_0, 0]^\top  &G_4^{(0)} &=[\mu x_0, 0, \lambda x_0^2]^\top \\
        G_5^{(0)} &=[-\omega y_0, 0, \lambda y_0^2]^\top &G_5^{(0)} &=[a x_0 y_0, 0, -\lambda z_0]^\top.
\end{align*}
For the sake of a readable example, we select $G_j^{(1)}$ such that the remaining terms at the next time are a function of only $w_t$, not $w_{t:t-1}$. The 17 terms satisfy
\begin{align*}
    &\omega w_t^x + \mu w_t^y + a w_t^y w_t^z + u_t \\ 
    &\qquad \quad = -\sum_{j=1}^{17} G_j^{(1)}(w_{t:t-1}) + \sum_{j=1}^6  (1-A_j^{(0)}) G_j^{(0)} (w_t).
\end{align*}
As in the first example, we fix all but one of the coefficients and sweep over a range of values for $A_2^{(2)}$. Because we have a larger space of disturbances, we run 1000 trials with disturbances generated according to $U(-1,1)$. The length of each trial is 23 time steps.
The results of our simulations are show in figure \ref{fig:fluid_results}. Again, we see that the cost is convex in $\alpha_2^{(0)}$ and smooth. We observe a tradeoff in the cost of the state versus the cost in the input; this corresponds to the disturbances decaying through the dynamics, which is controlled by the values of $\alpha$. 

\begin{figure}[t!]
  \centering
  \includegraphics[width=.235\textwidth,trim=0 0 0 0, clip]{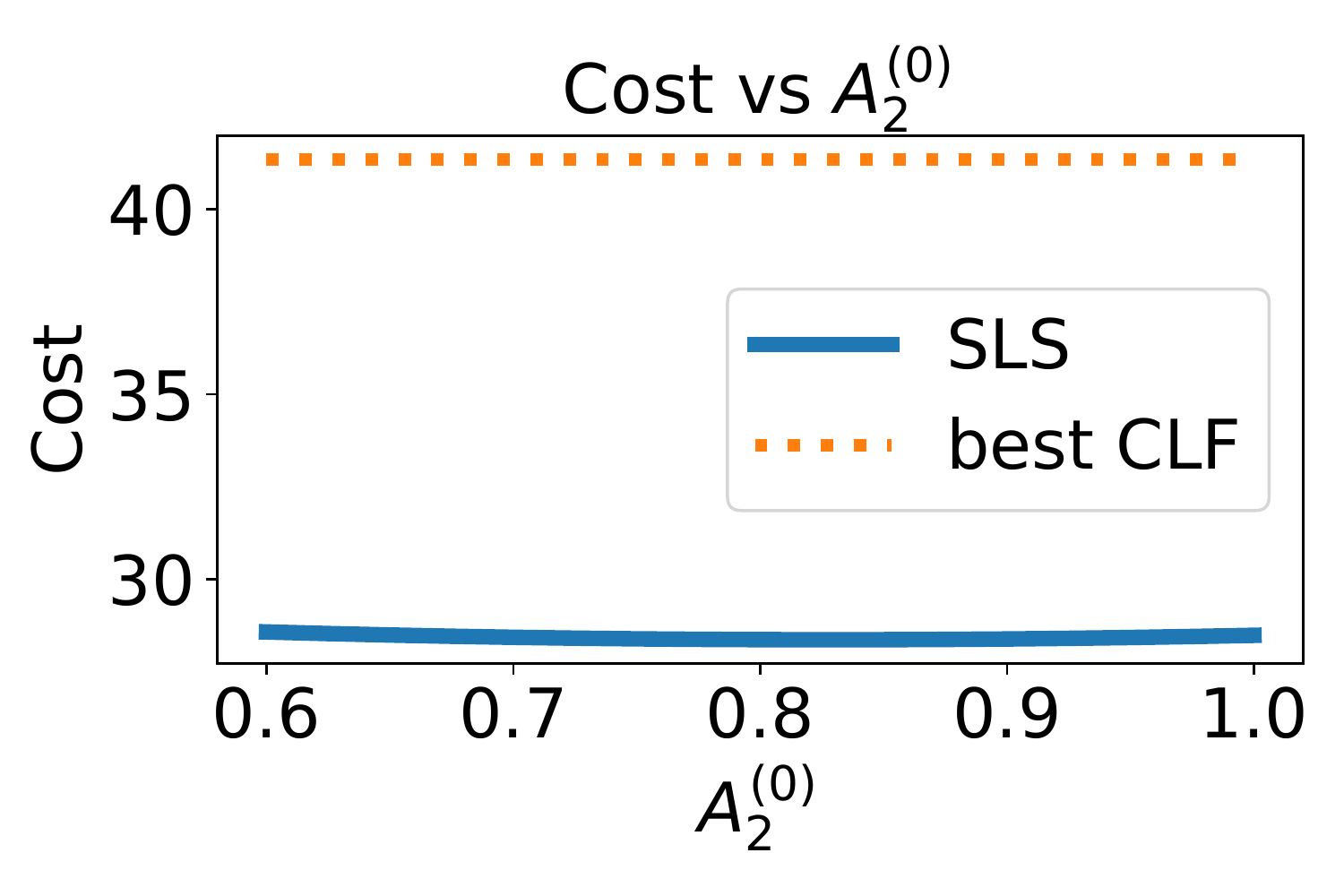}
  \includegraphics[width=.235\textwidth,trim=0 0 0 0, clip]{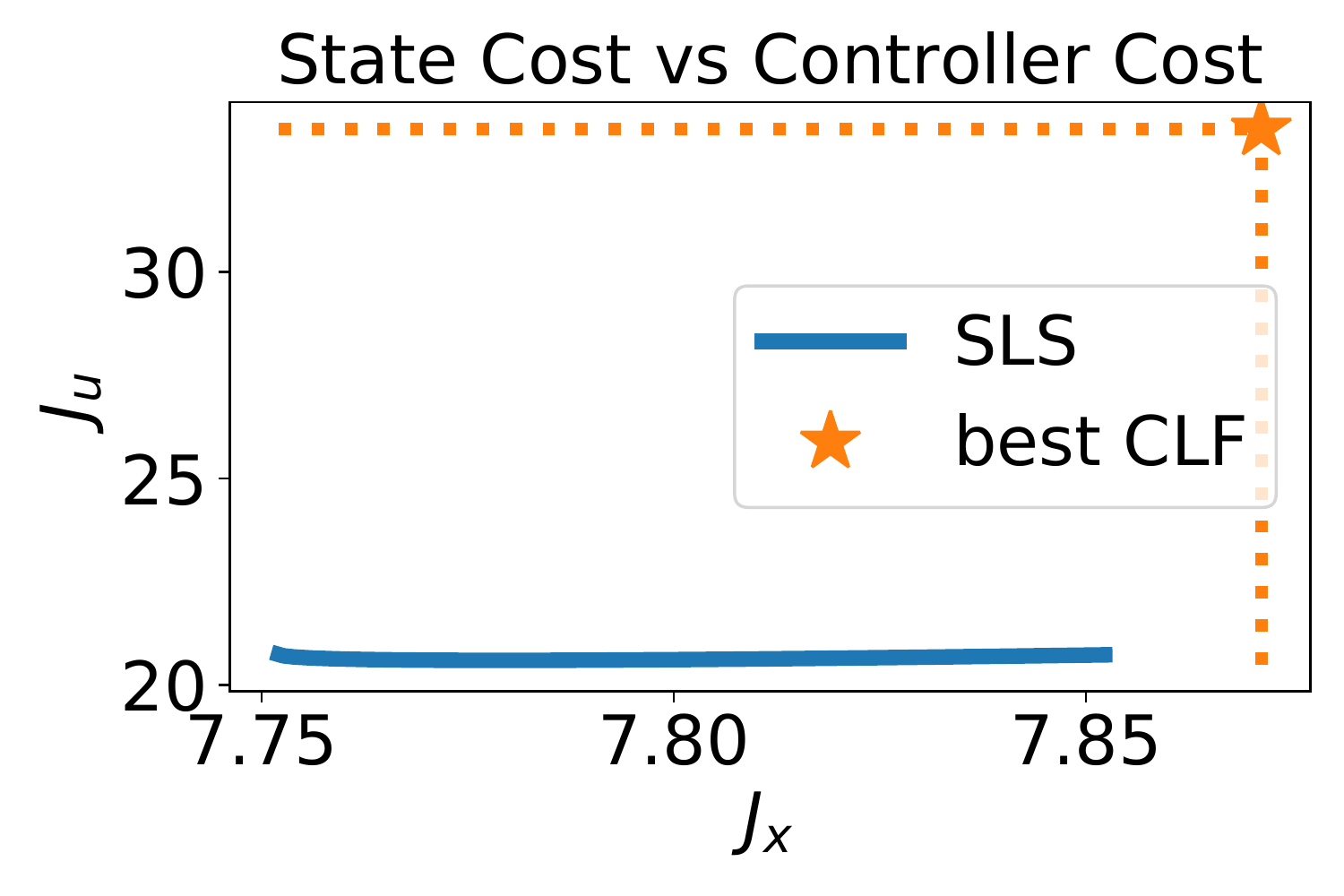}
  \includegraphics[width=.235\textwidth,trim=0 0 0 0, clip]{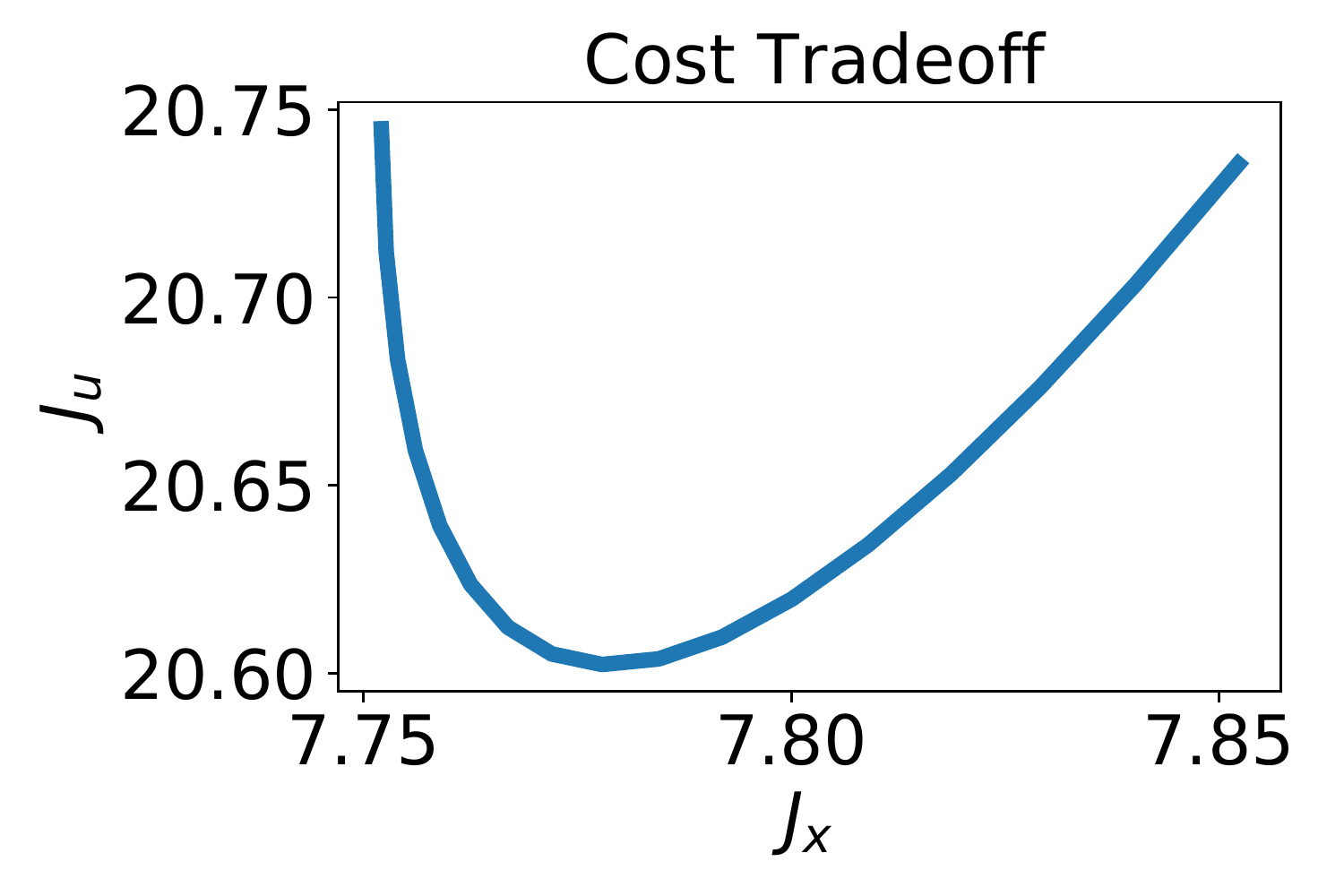}
  \includegraphics[width=.235\textwidth,trim=0 0 0 0, clip]{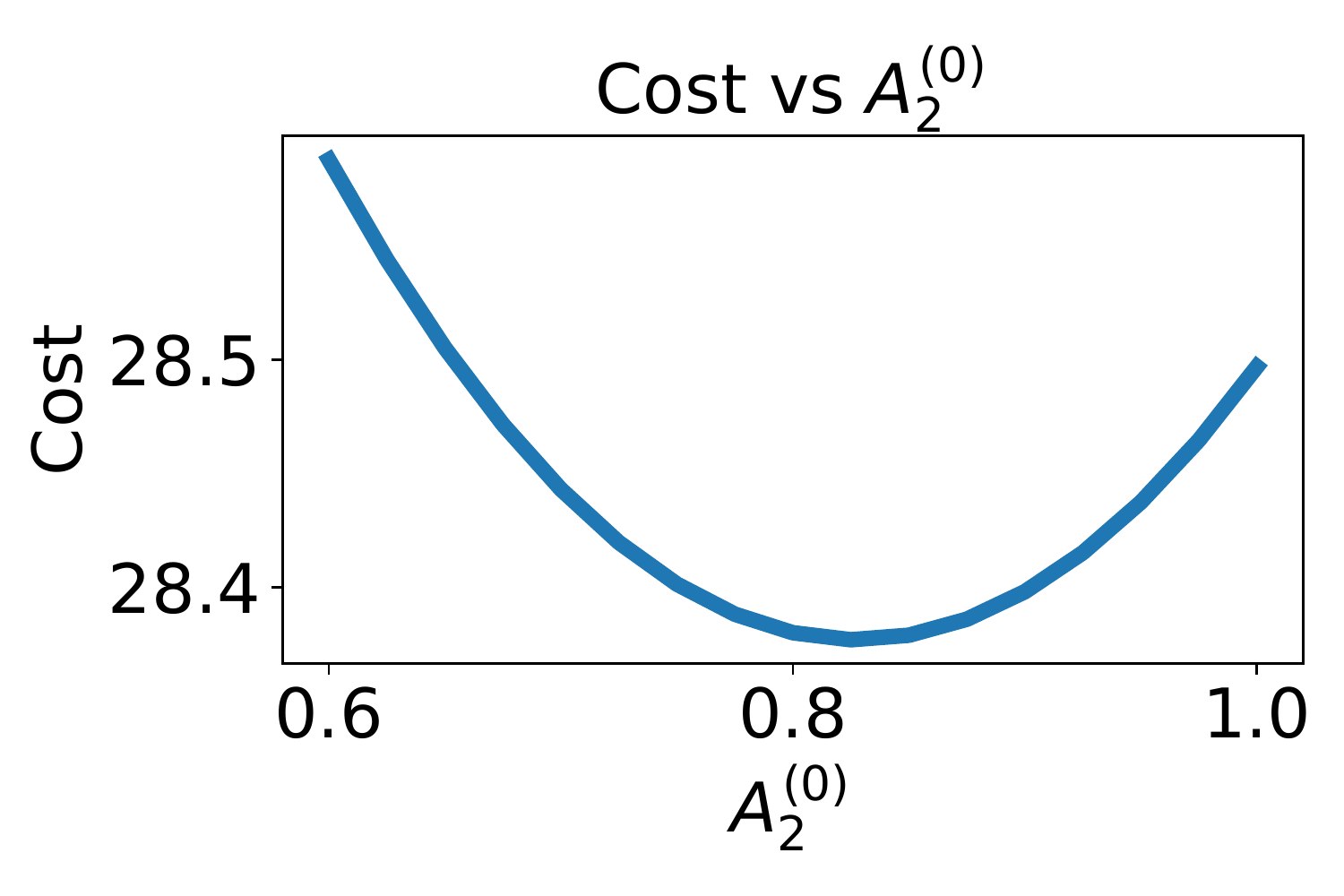}
  \vspace{-.15in}
  \caption{The value chosen for the $A_2^{(0)}$ for the vortex shedding model can be optimized with respect to the total cost.}
  \label{fig:fluid_results}
\end{figure}

\section{Conclusion}
In this work, we construct closed-loop, finite impulse response transfer functions for polynomial dynamical systems. The controller is parameterized by coefficients that determine the amount of feedback linearization for each monomial function of the disturbances. This offers a generalized perspective on feedback linearization, harnessing the framework of SLS for its construction. 

Because the controller is optimized offline, further research can investigate how to implement the coefficients as functions of the disturbances, offering additional flexibility and performance. These methods could also be extended to systems beyond the class of polynomials, or applied to other classes via polynomial approximation methods and lifting~\cite{Qian}. We are interested in the connection between our work and the dynamic programming literature, as we see parallels in iterating through a cost function over time. Because we recognize that the assumptions on \eqref{eq:scalar_polynomial} and \eqref{eq:vector_dynamics} are limiting, we wish to expand these classes to underactuated systems as well as develop appropriate definitions of controllability and stabilizability through this framework.
Other possible future work includes extending this framework to continuous-time systems, higher-order systems, and systems that are obtained through online learning. 

\bibliography{refs}
\bibliographystyle{IEEEtran}

\end{document}